\documentclass{amsart}

\usepackage{amssymb,amsmath,enumerate,color}
\usepackage{amsfonts, booktabs}
\usepackage{latexsym,parskip}
\usepackage{hyperref}
\hypersetup{pdfauthor=MS} \hypersetup{pdftitle=Fields}

\definecolor{red}{rgb}{.8,0,0}

\newcommand{\PP}{{\bf P}}

\newcommand{\Q}{{\bf Q}}
\newcommand{\QQ}{{\mathbb Q}}

\newcommand{\Z}{{\bf Z}}

\newcommand{\OO}{\mathcal{O}}

\def\ip(#1,#2){{\langle#1,#2\rangle}}

\newcommand{\Gal}{\mathop{\rm Gal}\nolimits}

\newcommand{\NS}{\mathop{\rm NS}\nolimits}

\newcommand{\XX}{{\mathcal X}}

\def\NeS{N\'eron--Severi}
\def\MoW{Mordell--\mbox{\kern-.12em}Weil}
\def\MOW{MORDELL--\mbox{\kern-.12em}WEIL}

\def\0{^{\phantom{0}}}
\def\9{_{\phantom{0}}}

\theoremstyle{plain}
\newtheorem{theorem}{Theorem}
\newtheorem{prop}[theorem]{Proposition}
\newtheorem{lemma}[theorem]{Lemma}

\def\be{\begin{equation}}
\def\ee{\end{equation}}
\def\benn{\begin{equation*}}
\def\eenn{\end{equation*}}
\def\bea{\begin{eqnarray}}
\def\eea{\end{eqnarray}}

\def\MoW{Mordell--\kern-.1emWeil}

\begin{document}

\title[An elliptic K3 surface $\XX/\QQ(t)$ with \MOW\ rank~$17$, 
 I: formulas]{\hbox{An elliptic K3 surface $\XX/\QQ(t)$ with \MOW\ rank~$17$}
 \hbox{I: Formulas for $\XX$ and base changes of ranks $18$ and $19$}
}

\author{Noam D.~Elkies}

\date{25 August 2026}

\begin{abstract}
  In \cite{NDE:NMBRTHRY28,NDE:MFO}
  we announced a K3 surface $\XX/\QQ$ with \NeS\ group
  $\NS(\XX) = \NS_\Q(\XX)$ of rank~$19$ and an elliptic fibration
  of \MoW\ rank $17$.  This is the largest possible \MoW\ rank over~$\Q(t)$
  for an elliptic K3 surface.  One of the fibers of this surface is
  the elliptic curve of rank at least~$28$ that was the elliptic curve
  of highest rank known during the years 2006--2024.
  We further announced that there are quadratic base changes
  of this fibration to elliptic surfaces of \MoW\ rank~$18$ over~$\Q(t)$,
  and that there are pairs of such quadratic base changes whose compositum is
  an elliptic surface of \MoW\ rank~$19$ over an elliptic curve $E_0/\Q$ with
  $\#(E_0(\Q)) = \infty$, whence there are infinitely many
  elliptic curves of rank at least~$19$ over~$\Q$.  We exhibit one such pair.
  A subsequent paper will show (using a techniques also announced
  in~\cite{NDE:MFO} how we computed $\XX$\/ and it fibration.
\end{abstract}

\maketitle

\keywords{Keywords: K3 surface, elliptic fibration, elliptic curve, rank records}

\section{Introduction}
\label{s:intro}

In the post \cite{NDE:NMBRTHRY28} to the NMBRTHRY mailing list,
we announced several records for ranks of elliptic curves,
and later gave further information in the notes \cite{NDE:MFO}
for a lecture series at an Oberwolfach meeting on computational number theory.
Here we finally give proofs of these results:

\begin{theorem}
\label{thm:rk17_0}
There is a K3 surface $\XX/\Q$ and an elliptic fibration
$t : \XX \to \PP^1$ whose generic fiber has rank $17$ over~$\Q(t)$,
the largest possible for a K3 surface over~$\Q$.
\end{theorem}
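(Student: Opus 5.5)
The proof has two parts: an upper bound showing that $17$ is the largest possible rank, and an explicit construction attaining it.

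For the upper bound, we use the Shioda--Tate formula. For an elliptic K3 surface $\XX\to\PP^1$ with a section, it gives
\[
\operatorname{rank}\NS(\XX_{\bar\QQ}) = 2 + \sum_v (m_v-1) + \operatorname{rank} E(\bar\QQ(t)),
\]
where $m_v$ is the number of components of the fiber at~$v$. The same formula holds over $\QQ$, with $\NS_\Q(\XX)$ and Galois-orbits of fiber components. By the Hodge index theorem and the Lefschetz $(1,1)$ theorem, $\operatorname{rank}\NS(\XX_{\bar\QQ})\le 20$. We then need two further points.
\begin{enumerate}[(i)]
\item Rank~$20$ cannot occur for $\NS_\Q$. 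By a standard argument (Shioda--Inose), a K3 surface of Picard number $20$ has the transcendental $H^{2,0}$ with CM by an imaginary quadratic field $K$. Galois then acts on the transcendental lattice through $K$, and the full \NeS\ group can be defined over $\QQ$ only if this Hodge-theoretic constraint is compatible with rationality. Equivalently, in the $\ell$-adic realization, Frobenius at inert primes acts on $H^2$ with eigenvalues $\pm p$ on the $2$-dimensional transcendental part, forcing a $\bar\QQ$-class not defined over~$\QQ$ or a sign twist. Hence $\operatorname{rank}\NS_\Q(\XX)\le 19$.
\item The fibration has at least one reducible fiber, or else we account for the trivial lattice $U$. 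This gives $\operatorname{rank} E(\QQ(t)) \le 19-2 = 17$.
\end{enumerate}

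For the existence, we exhibit explicitly a Weierstrass equation $y^2 = x^3 + A(t)x + B(t)$ with $\deg A\le 8$ and $\deg B\le 12$, so that the surface is K3. We choose it so that all singular fibers are irreducible (type $I_1$ or $II$). The K3 condition is checked from the degrees and the non-existence of a change of variables lowering them. We then list $17$ explicit sections $P_1,\dots,P_{17}\in E(\QQ(t))$ with polynomial coordinates of degrees at most $(4,6)$. We compute their height pairing matrix using Shioda's formula
\[
\ip(P,Q) = \chi + P\cdot O + Q\cdot O - P\cdot Q,
\]
with $\chi=2$ and no correction terms, since all fibers are irreducible. Once the $17\times17$ Gram matrix is shown to be nonsingular, the sections are independent, and the rank is at least $17$; combined with the upper bound, it equals $17$.

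The intersection numbers $P\cdot Q$ are computed as the number of $t$-values, including $t=\infty$, where the sections meet. This is done via resultants of $x_P-x_Q$ and $y_P-y_Q$. As a sanity check, the determinant of the Gram matrix should equal $|\operatorname{disc}\NS|$, matching a rank-$19$ even lattice of signature $(1,18)$.

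The main obstacle is finding the surface and the sections, not verifying them. Verification is a finite computation. For the construction, I would start from a rank-$19$ lattice $L$ with orthogonal complement of rank $2$ in $\mathrm{II}_{3,19}$. I would choose $L$ to be of the form $U\oplus M$, with $M$ having no roots, so that there are no reducible fibers. I would then seek a model of the corresponding K3 surface, for instance through a Shioda--Inose structure or a Kummer surface of a product of isogenous elliptic curves. Next I would search for an elliptic fibration realizing $U\oplus M$, using $2$- and $3$-neighbor steps. Finally, I would descend the fibration and all $17$ sections to $\QQ$, checking that Galois acts trivially on $\NS$. This last step is the delicate part of the upper-bound argument in reverse.
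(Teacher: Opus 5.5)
\textbf{Existence is not actually established.} Your verification scheme is the paper's own. With no reducible fibers and integral sections, $\langle P,Q\rangle=2-P\cdot Q$, where $P\cdot Q$ counts the common zeros of $x_P-x_Q$ and $y_P-y_Q$ on $\PP^1$ \emph{with multiplicity}. That is $\deg\gcd$ including $t=\infty$; a resultant only detects whether the sections meet. A nonsingular Gram matrix then gives independence.

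For an existence theorem, though, the object is the proof, not the verification scheme. The paper's proof is the explicit data: $S$ and $T$, the seventeen $x_i$ with sign choices for the $y_i$, and the resulting Gram matrix of determinant $948\neq 0$. You say you ``exhibit'' these but do not. Your last paragraph replaces them with a search plan whose decisive step is simply assumed to succeed: find a member of a rank-$19$ lattice-polarized family that is defined over $\QQ$ \emph{together with} its whole \NeS\ lattice, so that the fibration and all $17$ sections descend. Nothing in the proposal shows such a surface exists. The paper's introduction says it came from a non-CM point on a genus-$2$ Shimura curve, which is genuine arithmetic input. As it stands, you have shown only that such data could be verified if found.

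\textbf{Your upper bound rests on a false lemma.} Step (i), that $\rho(\XX/\QQ)\le 19$ for every K3 surface over $\QQ$, is not true. Take $y^2=x^3+t^5(t-1)^2$ over $\QQ$. It is K3, with fibers $II^*$ at $t=0,\infty$ and $IV$ at $t=1$ ($e=10+10+4=24$). Every fiber component is defined over $\QQ$; for the $IV$ fiber this is because $t^5=1$ is a square at $t=1$. So the zero section, the fiber, and the $8+8+2$ non-identity components already give $\rho(\XX/\QQ)=20$. At primes inert in $\QQ(\sqrt{-3})$ the transcendental Frobenius behaves as you describe, yet no class is irrational, so your Frobenius heuristic proves nothing. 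Shioda--Tate alone therefore gives only $\operatorname{rank}E(\QQ(t))\le 18$.

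What is impossible over $\QQ$ is a conjunction. Rank $18$ forces $\rho(\XX/\QQ)=20$ \emph{and} no reducible fibers even over $\bar\QQ$. Then $\NS(\XX_{\bar\QQ})$ would be $U$ plus the negative of a root-free rank-$18$ \MoW\ lattice, with trivial Galois action. Ruling this out is the nontrivial fact the paper cites as already known; you need to invoke it (or prove it) in place of $\rho\le 19$.

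A minor point: $\det G$ equals $|\operatorname{disc}\NS|$ only if the $17$ sections generate the \MoW\ group; in general the two differ by the square of the index.
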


\begin{theorem}
\label{thm:rk28}
The fibration $\XX$ has a fiber that is an elliptic curve of rank
at least~$28$ over~$\Q$.
\end{theorem}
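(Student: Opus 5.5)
The plan is to specialize the fibration of Theorem~\ref{thm:rk17_0} at a suitable rational value $t=t_0$ and then exhibit $28$ independent rational points on the fiber $\XX_{t_0}$. The fiber inherits $17$ points from the $17$ independent sections $P_1,\dots,P_{17}\in\XX(\Q(t))$. The remaining $11$ points have to be found on that particular fiber. First I would make the value $t_0$ explicit. It is the parameter at which the fiber becomes the record curve $E_{28}$ of rank $\ge 28$ known from 2006. To identify it, I would reduce the Weierstrass equation of $\XX_{t_0}$ to a standard minimal model and compare $j$-invariants and a quadratic twist. If necessary I would also exhibit an explicit isomorphism with the curve $y^2+xy+y=x^3-x^2-a x+b$ in the announcement.

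Next come the specialized sections. Silverman's specialization theorem only guarantees that $P_i\mapsto P_i(t_0)$ is injective for all but finitely many $t_0$ of bounded height, so I will not rely on it. Instead I take the $17$ specialized points $P_i(t_0)$ together with $11$ extra rational points on $\XX_{t_0}$. These extra points are found by a search: points of small naive height on $E_{28}$, or points already listed in the announcement. They can be given explicitly in an appendix or table of $x$-coordinates.

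The key step is to prove that these $28$ points are independent in $\XX_{t_0}(\Q)$. I will compute the $28\times28$ canonical-height Néron--Tate pairing matrix numerically, to enough precision to certify that its determinant is nonzero. The height is computed as a sum of local heights, with rigorous error bounds on the archimedean part, as in PARI or Magma. A positive regulator then gives rank at least $28$. As an alternative, and to avoid numerical error entirely, I would prove independence modulo $2$ or by reduction. The points can be mapped into $\prod_p \XX_{t_0}(\mathbf F_p)/\ell$ for several small good primes $p$, or into $\Q^*/\Q^{*2}$-type groups via $2$-descent maps. It then suffices to show the image has $\mathbf F_2$-rank $28$ (after accounting for torsion, which is trivial here). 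This argument is exact and finite.

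I expect the main obstacle to be this independence certification, because the heights are large. The reduction-mod-$p$ test needs enough primes that the $28$ points span an $\mathbf F_\ell$-space of full dimension. Getting there may require many primes and careful bookkeeping, and if it fails I would fall back on the rigorous interval-arithmetic regulator computation. Locating $t_0$ and verifying the isomorphism with $E_{28}$ is routine polynomial arithmetic once the formulas for $\XX$ given later in the paper are in hand.
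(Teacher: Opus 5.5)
Your proposal is correct and follows essentially the same route as the paper: locate the fiber that is the 2006 record curve, and use the $28$ independent points known on it. The paper's proof is just that identification made explicit, namely the fiber at $t_0=-9529/5471$ (the value your $j$-invariant matching would produce), together with a citation of the 2006 announcement for the $28$ points; your independence certification (rigorous regulator or reduction mod~$p$) is what that citation stands in for, not something the paper redoes.
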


\begin{theorem}
\label{thm:rk18,19}
a) The fibration $t : \XX \to \PP^1$ has quadratic base changes
$t : \PP^1 \to \PP^1$ to elliptic surfaces of rank at least~$18$
over $\PP^1_{\!\Q}$.
\\
b) There are pairs of fibrations from~(a) whose compositum gives
biquadratic base changes $E_0 : \PP^1 \to \PP^1$ to a elliptic surfaces
of rank at least~$19$ over some elliptic curve $E_0 / \Q$ of positive rank.
\\
c) There are infinitely many elliptic curves $E / \Q$ of rank at least~$19$.
\end{theorem}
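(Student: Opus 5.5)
We will prove Theorem~\ref{thm:rk18,19} by explicit construction on top of Theorem~\ref{thm:rk17_0}. The mechanism is the same in all three parts. If $t = f(u)$ is a quadratic base change, then the elliptic surface $\XX_f$ over $\PP^1_u$ carries the $17$ pulled-back sections. It may carry an extra section, and this happens exactly when the quadratic twist of $\XX$ by the double cover $u \mapsto f(u)$ has a $\Q(t)$-rational section of positive height.

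\textbf{Part (a).} Write the generic fiber as $y^2 = x^3 + a_2(t)x^2 + a_4(t)x + a_6(t)$. For a double cover $u^2 = q(t)$ with $q$ quadratic, the twist is
\[
q(t)\,y^2 = x^3 + a_2 x^2 + a_4 x + a_6 .
\]
A new section amounts to $x(t) \in \Q[t]$ of low degree such that the right-hand side equals $q(t)\,s(t)^2$. Equivalently, we look for a polynomial $x(t)$ for which the cubic $x^3 + a_2x^2 + a_4x + a_6$ evaluated at $x(t)$ is a square times a quadratic.

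For the twist to stay rational, hence K3 or with controlled Euler number, the ramification points of $q$ should lie at fibers of suitable reduction type. So we choose $q$ to vanish at two of the reducible or singular fibers of $\XX$, read off from the explicit model. Then we solve for $x(t)$ of degree at most $2$ (or $4$) by matching coefficients. This gives finitely many polynomial equations over $\Q$, and we exhibit rational solutions.

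Rank at least $18$ then follows because the new section is anti-invariant under the deck involution $u \mapsto -u$, while the old $17$ are invariant. Hence the new section is orthogonal to them in the height pairing, and it is nonzero because it has positive height. We will compute the height pairing matrix explicitly, or specialize, to confirm the $18 \times 18$ Gram determinant is nonzero.

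\textbf{Part (b).} Take two such double covers, $u_1^2 = q_1(t)$ and $u_2^2 = q_2(t)$. Their compositum is the curve
\[
C : u_1^2 = q_1(t), \quad u_2^2 = q_2(t),
\]
a biquadratic cover of $\PP^1_t$ of genus one (two quadrics in $\PP^3$), which we denote $E_0$. Over $E_0$ we have the $17$ old sections plus the two new ones. These lie in distinct eigenspaces of the Klein four-group $(\Z/2)^2$ acting on the \MoW\ lattice, so they are mutually orthogonal and the rank is at least $19$.

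We must then check that $E_0$ has a rational point, making it an elliptic curve, and that $E_0(\Q)$ is infinite. Our plan is to find a rational point, transform to Weierstrass form, and exhibit a point of infinite order via the Nagell--Lutz criterion or a nonintegral multiple.

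\textbf{Part (c).} By Silverman's specialization theorem, applied to the elliptic surface over $E_0$, all but finitely many points $P \in E_0(\Q)$ give specializations with rank at least $19$. Since $E_0(\Q)$ is infinite, this yields infinitely many $j$-distinct curves. For that we need the $j$-invariant to be nonconstant on $E_0$, which holds because $j$ is nonconstant on $\XX$.

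\textbf{Main obstacle.} The key difficulty is the search in (a): finding $q$ and the polynomial $x(t)$ so that the cubic becomes $q$ times a square, and in (b) finding a compatible pair whose compositum has positive rank. We will attack this first by restricting to $q$ ramified at the bad fibers and doing a computer search over small-height coefficients, using the fiber of rank $28$ from Theorem~\ref{thm:rk28} to suggest candidate $x$-coordinates.
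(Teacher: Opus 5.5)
Your independence arguments and your outline for (b) and (c) match the paper: the new sections lie in distinct eigenspaces of the deck group, the compositum $E_0$ has genus one, and (c) follows from Silverman's theorem. The gap is the existence step in (a), on which everything else rests. You leave it to a search with no guarantee of success, and the search is aimed at the wrong objects.

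First, $\XX$ has no reducible fibers ($\rho=19=2+17$). Twisting by a quadratic $q$ always creates two starred fibers ($I_0^*$, or $I_1^*$, $IV^*$ over an $I_1$, $II$ fiber), and each adds $6$ to the Euler number. So the twist has $\chi=3$ no matter where $q$ vanishes, and putting the roots of $q$ at singular fibers buys nothing. The base changes that actually work have no reason to branch there: the paper finds $39120$ of them, against at most $276$ pairs of singular fibers.

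Second, a solution of your ansatz ($x\in\Q[t]$ of degree $\le 4$ with $x^3-27Sx+\frac{27}{4}T=q\,s^2$) is a section of the base change disjoint from the zero section. When the base change ($\chi=4$) has no reducible fibers, such a section has height $8$. Geometrically it is a $\Q$-rational curve $x=a(t)$ of arithmetic genus $5$ in $|2s_0+4f|$ that happens to be rational. There are only finitely many such curves over $\overline{\Q}$, and nothing forces one to be defined over $\Q$. The sections that provably exist look different. For a rational quadratic section $C$ branched over smooth fibers, $h(P_C)=8+2\,C\cdot s_0$ and $P_C\cdot P'_C=C^2+4=2$, whence $h(P_C-P'_C)=12$. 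In your model their $x$-coordinate therefore has poles, typically $x=X/q$ with $\deg X=6$, so your ansatz cannot find them. If the twist has rank one (as expected), there is then no solution of your shape for that $q$ at all, since $12=8k^2$ is impossible.

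The missing idea is the paper's: a $(-2)$-curve $C\subset\XX$ with $C\cdot f=2$ makes $t|_C\colon C\to\PP^1_t$ a quadratic base change with the tautological extra section $P_C\colon p\mapsto p$. Such curves come from Riemann--Roch, not from search. For $\tau$ with $h(\tau)\equiv 2\pmod 4$, the class $D_\tau=s_\tau+s_0+\frac{6-h(\tau)}{4}f$ has $D_\tau^2=-2$ and $D_\tau\cdot f=2$, so it is effective. It is an irreducible quadratic section exactly when no $P$ has $h(\tau-2P)=6$ (Proposition~\ref{prop:6}). Being the only effective divisor in a $\Q$-rational class, it is defined over $\Q$. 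It has rational points because some section meets it an odd number of times. This turns (a) into a lattice computation. For (b) one then exhibits $q_1$, $q_2$ with square leading coefficients, so that $E_0$ has rational points over $t=\infty$, and checks that $E_0$ has positive rank (it has rank $4$).
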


Theorem~\ref{thm:rk17_0} was announced some months earlier,
before we had obtained explicit equations for $\XX$ and~$t$,
because it requires only a non-CM point on a Shimura curve
(which arises as a moduli space for suitably polarized K3 surfaces).
This also required some computation, but was much easier because
the Shimura curve happens to have genus~$2$ with bielliptic involutions
whose quotients were already tabulated.  The curve and its CM and known
non-CM points are exhibited in~\cite{NDE:MFO}.

For Theorem~\ref{thm:rk18,19},
part~(a) can also be proved without an explicit model for $(\XX,t)$.
Then parts (b) and~(c) nearly follow: there are many choices
of the two quadratic base changes that guarantee $E$\/ has rational points,
and it seems most unlikely that all such points are torsion;
but we could not prove this without computing $\XX$ and~$t$.
Part~(c) then follows from~(b) by Silverman's specialization theorem.
As far as we know $18$ and $19$ remain the records for
the rank of a nonconstant elliptic curve over~$\Q(t)$
and the $\limsup$ of the ranks of elliptic curves over~$\Q$.

We did need an explicit formula for $(\XX,t)$
also to start searching for fibers of record rank
such as the one that proved Theorem~\ref{thm:rk28}.
In~\cite{NDE:NMBRTHRY28} we exhibited a fiber with $28$ independent points, and
in~\cite{NDE:MFO} we explained how we searched for candidate high-rank fibers
and for extra generators on such fibers.  As far as we know,
before the AI explosion of the summer of 2026
every known elliptic curve of rank $ > 24 $ over~$\Q$
was obtained from a high-rank elliptic fibration of the same K3 surface~$\XX$.

We did not exhibit our Weierstrass equation for $(\XX,t)$,
whose coefficients have a total of about $800$ digits,
plus another $1200+$ digits for  the \hbox{$x$-coordinates} $x_i$ of
$P_i = (x_1(t),y_1(t))$ through $(x_{17}(t),y_{17}(t))$
(the $y_i$ are then determined up to sign by the Weierstrass equation).
We expected that reconstructing an equation from
the information in~\cite{NDE:MFO} would be an exercise,
albeit a difficult one; the published \hbox{rank-$28$} fiber
(and the fibers of rank $25,26,27$ published later)
probably would not help, but could be used to check that one has found
the intended solution, by using the \hbox{$j$-invariants} to solve for~$t$.

To our knowledge --- and quite to our surprise ---
this puzzle remained unsolved until earlier this month when
Michael Rubinstein \cite{Rubinstein} e-mailed us an equation,
reporting that he used both Claude and Grok to do this,
and indeed locating those curves of rank~$28$ and~$27$ as fibers.
Coincidentally this happened a day before the first published
\hbox{rank-$30$} curve appeared at \cite[Curve~273]{ICARM},
found by Alp\"{o}ge and Howell.  They also used Claude;
as of this writing, they have not revealed their method,
but we checked that their curves of rank~$30$ (and now~$31$
\cite[Curve~302]{ICARM}) are \underline{not} in our fibration. 

In the next section of this paper we exhibit equations for
$(\XX,t)$ and~$x_i(t)$ (and the first \hbox{$y$-coordinate} $y_1(t)$).
We then use the height pairing to check that
the $P_i$ are independent in the elliptic-curve group law.
The maximum rank of an elliptic K3 surface in characteristic zero
is~$18$, but it was already known that this maximum cannot be attained
over~$\Q$.  Thus we prove Theorem~\ref{thm:rk17_0} with an explicit
fibration and generators (Theorem~\ref{thm:rk17}).  We then exhibit
the \hbox{$t$-coordinate} of our fiber of rank at least~$28$, thus proving
Theorem~\ref{thm:rk28}, and also the \hbox{$t$-coordinates} of
fibers of rank at least $25$, $26$, and~$27$ that we announced later.

In the final section we prove Theorem \ref{thm:rk18,19}
via a characterization (Proposition~\ref{prop:6}) of
quadratic base changes coming from rational quadratic sections,
and exhibit one choice of $E_0$.

We give long polynomials, matrices, etc.\ in machine-readable form
(specifically GP/PARI syntax, but this is easy to convert to
other packages such as Magma), so that one can easily copy-and-paste
the equations from the PDF file to a computer algebra system 
--- or these days to an AI session --- for checking and further investigation.

\section{An elliptic K3 surface of \MoW\ rank~$17$ over~$\Q$}
\label{s:X}

We prove Theorem~\ref{thm:rk17_0} by exhibiting an elliptic fibration
and the $x$-coordinates of $17$ sections, and checking that they are
independent.  Since it is already known that every elliptic K3 surface
has rank $< 18$ over~$\Q(t)$, this proves that the rank is exactly~$17$.
Finally we exhibit the \hbox{$t$-coordinates} of the four fibers
that we found in April and May~2006 with \MoW\ ranks are at least
$25, 26, 27, 28$.

\subsection{The elliptic fibration.}
\label{ss:X}
We shall prove:

\begin{theorem}
\label{thm:rk17}
Let $\XX/\Q(t)$ be the elliptic K3 surface
\be
\label{eq:X}
\XX: y^2 = x^3 - 27 S(t) x + \frac{27}{4} T(t)
\ee
where $S,T \in \Z[t]$ are the polynomials
{\small
\begin{verbatim}
{
S = 307516108335972163537936*t^8 + 10476571005172375234427296*t^7
   + 234256046667228607566274912*t^6 + 2020678721371875903158954848*t^5
   + 8789387383568632081365832240*t^4 + 21430123310022469548285709072*t^3
   + 28607402618712438778345257832*t^2 + 17860826619093915900857289304*t
   + 4201305425690184127251888481;

T = 1050290276365892761266194577222156800*t^12
   + 67802587761728815952013525763236564480*t^11
   + 2392486076703808362288120169049836903680*t^10
   + 38126035250980128714796491999580538771200*t^9
   + 372202978476351718721663756748866085220800*t^8
   + 2373760737463050257069464720014664373086080*t^7
   + 9904246958414858348647761354992989326760320*t^6
   + 26905633537996991160744810870319331164617600*t^5
   + 47243082583908684509409509915652973906060800*t^4
   + 52862444598312784274784438443066814490530880*t^3
   + 36435013603665838306995466090052055171475872*t^2
   + 13865015501478235534002649882546248548532768*t
   + 2193201312876924214657300134273061462776968
}.
\end{verbatim}
}
\end{theorem}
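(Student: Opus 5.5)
The plan is to exhibit the $17$ sections $P_i=(x_i(t),y_i(t))$ given in the next part of this section, show that they are independent in $\XX(\Q(t))$, and then get equality from a known upper bound. Since it is already known (as recalled above) that no elliptic K3 surface has \MoW\ rank $18$ over $\Q(t)$, independence of $17$ rational sections forces the rank to be exactly $17$. The work therefore splits into three parts: checking that $\XX$ is an elliptic K3 surface, checking that each $P_i$ lies on it, and proving that the $P_i$ are independent.

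\textbf{Step 1: $\XX$ is an elliptic K3 surface.} Here $\deg S = 8 \le 8$ and $\deg T = 12 \le 12$. I will check that the discriminant, proportional to $4S^3 - T^2$ (up to the normalizing constants), is not identically zero, so the fibration is not isotrivial. I will also check minimality: at no place $v$ of $\PP^1$, including $t=\infty$, do we have $v(S)\ge 4$ and $v(T)\ge 6$ simultaneously. Together these give Euler characteristic $\chi = 2$, i.e.\ a K3 surface. I will then factor the discriminant over $\Q$ and over $\overline{\Q}$, including the place at infinity, and read off the Kodaira types of the singular fibers by Tate's algorithm. This yields the trivial lattice, and via Shioda--Tate a consistency check: the reducible fibers must leave room for rank $17$ in $\rho \le 20$. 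Since $\rho(\XX)=19$, I expect the trivial lattice to have rank $19-17=2$, meaning all singular fibers are irreducible.

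\textbf{Step 2: the $P_i$ lie on $\XX$.} For each $i$ I will verify that $x_i^3 - 27Sx_i + \tfrac{27}{4}T$ is a square in $\Q(t)$, which yields $y_i$. This is a straightforward polynomial computation.

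\textbf{Step 3: independence via the height pairing.} I will compute the $17\times 17$ Gram matrix of the Shioda height pairing,
$$
\langle P,Q\rangle = \chi + (P\cdot O) + (Q\cdot O) - (P\cdot Q) - \sum_v \mathrm{contr}_v(P,Q), \qquad \chi = 2 .
$$
The intersection numbers come from the following data:
\begin{itemize}
\item $(P\cdot O)$ is read off from the degree of $x_i$ relative to $4$ at $t=\infty$ and from the poles of $x_i$ at finite places; when the $x_i$ are polynomials of degree $\le 4$, we get $P\cdot O = 0$ and $\langle P,P\rangle = 4$.
\item $(P_i\cdot P_j)$ is computed from the common zeros of $x_i - x_j$ and $y_i - y_j$ with multiplicity, counting $\infty$ with the appropriate weights.
\item The correction terms $\mathrm{contr}_v$ arise only at reducible fibers, if there are any.
\end{itemize}
Independence then amounts to a nonzero determinant, an exact rational computation.

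I expect the main obstacle to be correctly computing $P_i\cdot P_j$, especially at $t=\infty$ and at places where both sections meet a singular point of a fiber. As a safeguard I will also argue independence by specialization. Pick a rational $t_0$ with smooth fiber and check that the specialized points in $E_{t_0}(\Q)$ have a nonsingular numerical Néron--Tate regulator. Better still, for rigor, show that their images in $E_{t_0}(\mathbb F_p)$ for several good primes $p$ generate a group that surjects onto $(\Z/\ell)^{17}$ for some $\ell$. Because specialization is a homomorphism, independence at one fiber implies independence over $\Q(t)$.
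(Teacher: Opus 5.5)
Your proposal is correct and is essentially the paper's proof. The paper likewise exhibits the $x_i$ and computes the Gram matrix from $(P_i,P_i)=4$ and $(P_i,P_j)=2-\deg\gcd(x_i-x_j,\,y_i-y_j)$; this is exactly your Shioda formula with $\chi=2$, $P\cdot O=0$ for integral sections, and no correction terms because there are no reducible fibers. It then finds $\det G=948\neq0$ and concludes with the known bound rank $<18$.

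Your Step~1 checks (that $\XX$ is K3 and has no reducible fibers) supply what the paper simply asserts. The specialization safeguard is unnecessary. If you keep it, note that independence of the images in $\prod_p E_{t_0}(\mathbb F_p)/\ell E_{t_0}(\mathbb F_p)$ also requires $E_{t_0}(\Q)[\ell]=0$, to rule out relations that land on a torsion point.
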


{\em Proof}\/: We exhibit the \hbox{$x$-coordinates} $x_1,\ldots,x_{17}$
of $17$ solutions $P_i = (x_i(t), y_i(t))$ ($1 \leq i \leq 17$)
of~(\ref{eq:X}) in polynomials of degrees $4,6$, and check that
they are independent in the \MoW\ group of~$\XX$\/ by computing
the height pairings $(P_i,P_j)$ and verifying that the Gram matrix
$\bigl((P_i,P_j)\bigr)_{i,j=1}^{17}$ has nonzero determinant.

Since any pair $(x,\pm y)$ sums to zero in the group law,
it is enough to exhibit the~$x_i$, because the choice of~$y_i$
does not affect the rank.  We must still make a choice in order to compute 
height pairings $(P_i,P_j)$.  Having exhibited~$x_i$, we can specify
the choice of $y_i$ by giving just the sign of its leading coefficient.
Nevertheless, for $i=1$ we exhibit both $x_1(t)$ and $y_1(t)$ in full
to guard against typographical or copying errors in these coordinates
as well as the coefficients $S(t)$ and $T(t)$:

\begin{verbatim}
{ x1 = 419884536396*t^4 - 6900780974412*t^3 + 84146613883956*t^2
      + 448019664127620*t + 304456582100883;
  y1 = -1917605876395727232*t^6 - 102352278854532258864*t^5
      - 1140847719698231045748*t^4 - 4035207954948742785564*t^3
      - 3519107150812739581680*t^2 + 3523393851784245137088*t
      + 2913630401455186533120 }
\end{verbatim}

The remaining $x$-coordinates $x_2,\ldots,x_{17}$ are

{\tiny
\begin{verbatim}
{[ 960407733324*t^4 + 32982109084392*t^3 + 228946163155128*t^2 + 451794461129916*t + 136981770876723,
   191092188813708*t^4 + 1333867432517100*t^3 + 1886874275645632*t^2 - 1605097821400112*t - 329325794912045,
   881331598668*t^4 + 20106018946320*t^3 + 191145949680312*t^2 + 489728900722308*t + 205182206512275,
   7087168886668*t^4 + 60084894852268*t^3 + 119727776998960*t^2 + 70169296825056*t + 124968924115923,
  -2654105330292*t^4 - 75993994150932*t^3 - 436645278959760*t^2 - 794565531069024*t - 379495133581677,
  -2842297467828*t^4 - 76265967628812*t^3 - 435348555495516*t^2 - 839785632779964*t - 362275017421677,
  -2947994548863*t^4 - 25598671575906*t^3 + 63026792718435*t^2 + 464786649518334*t + 320073994727283,
  -2414976971316*t^4 + 54450640822344*t^3 + 2834155382615496*t^2 - 12715093268802228*t + 9732633560757363,
   13412195434209*t^4 - 250429886278338*t^3 - 242466751598877*t^2 + 666777676835166*t + 521473683384723,
   353434406988*t^4 + 62514191744628*t^3 + 271192708423620*t^2 - 55497536934924*t - 125053466701677,
  -1586365228500*t^4 - 83483171473260*t^3 - 486949775116428*t^2 - 653155402766412*t + 345607319019603,
   5724934740993*t^4 - 29562185743194*t^3 - 764745203764737*t^2 - 1492345666793982*t + 2685202943830203,
   2426651051916*t^4 + 18957579322812*t^3 + 164233784236041*t^2 + 694007861500356*t + 964931580370398,
   3734241561804*t^4 + 27061905787332*t^3 + 153585168336648*t^2 + 413697107315976*t + 260813404752123,
  -798764561556*t^4 - 57890934328188*t^3 - 354534107851872*t^2 - 678508004328024*t - 287286844790877,
   2607059076492*t^4 + 25332675721548*t^3 + 206429765168484*t^2 + 494998206601236*t + 408936541867923 ]}
\end{verbatim}
}

We choose $y_1,\ldots,y_{17}$ whose leading coefficients have signs
\be
\label{eq:y_signs}
- + - + + + + + - + - + + + - + +
\ee
and calculate the resulting Gram matrix~$G$ of height pairings
(see Lemmas~\ref{lem:ht} and \ref{lem:ip} below):

\begin{verbatim}
 {[ 4,-2,-2,-2,-2,-2,-2,-2,-2,-2,-2,-2,-2,-2,-2,-2, 1;
   -2, 4, 2, 1, 0, 1, 0, 1, 1, 1, 1, 2, 1, 1, 1, 1, 1; 
   -2, 2, 4, 2, 1, 0, 1, 2, 1, 1, 1, 0, 1, 1, 2, 1, 0;
   -2, 1, 2, 4, 1, 0, 1, 1, 2, 1, 1, 1, 0, 1, 0, 1, 0; 
   -2, 0, 1, 1, 4, 2, 0, 1, 1, 0, 1, 1, 1, 2, 1, 0,-1;
   -2, 1, 0, 0, 2, 4, 1, 0, 0, 1, 2, 1, 1, 1, 1, 0,-1; 
   -2, 0, 1, 1, 0, 1, 4, 0, 0, 1, 1, 0, 1, 1, 2, 2,-2;
   -2, 1, 2, 1, 1, 0, 0, 4, 1, 2, 1, 0, 1, 1, 1, 1, 0; 
   -2, 1, 1, 2, 1, 0, 0, 1, 4, 2, 2, 1, 2, 2, 1, 1,-1;
   -2, 1, 1, 1, 0, 1, 1, 2, 2, 4, 2, 0, 2, 1, 2, 1,-1; 
   -2, 1, 1, 1, 1, 2, 1, 1, 2, 2, 4, 0, 1, 2, 2, 1,-2;
   -2, 2, 0, 1, 1, 1, 0, 0, 1, 0, 0, 4, 0, 1, 0, 1, 1;
   -2, 1, 1, 0, 1, 1, 1, 1, 2, 2, 1, 0, 4, 2, 2, 1,-2;
   -2, 1, 1, 1, 2, 1, 1, 1, 2, 1, 2, 1, 2, 4, 2, 1,-2;
   -2, 1, 2, 0, 1, 1, 2, 1, 1, 2, 2, 0, 2, 2, 4, 1,-2;
   -2, 1, 1, 1, 0, 0, 2, 1, 1, 1, 1, 1, 1, 1, 1, 4, 0;
    1, 1, 0, 0,-1,-1,-2, 0,-1,-1,-2, 1,-2,-2,-2, 0, 4 ]}
\end{verbatim}

It has nonzero determinant~$948$, so $P_1,\ldots,P_{17}$ are independent
as claimed.

\subsection{The height pairing.}
\label{ss:pair}
It remains to explain the computation of the entries $(P_i,P_j)$ of~$G$.
We regard (\ref{eq:X}) as an equation defining $\XX$\/ as a surface
of degree~$12$ in the $(1,1,4,6)$ weighted projective space 
with coordinates $t,1,x,y$.  The elliptic fibration $\XX \to \PP^1$
is the map $(t:1)$; elements of the \MoW\ group are sections of the fibration,
and a nonzero section is a solution $(x,y)$ of~(\ref{eq:X})
in rational homogeneous forms in~$(t:1)$ of degrees~$4$ and~$6$.
Here a ``rational homogeneous form'' of degree~$w$ is a quotient $N/D$\/
of homogeneous polynomials of degrees $d,d-w$ for some $d \geq w$,
with $D \neq 0$.
Since the numerator~$N$\/ and denominator~$D$\/ are binary forms,
we may assume (by clearing common factors) that $N,D$\/ are relatively prime.

\begin{lemma}
\label{lem:ht}
Let $P = (x,y)$ be any nonzero element of the \MoW\ group of~(\ref{eq:X}),
with $x = N/D$\/ for some homogeneous forms $N,D$\/ of degrees $d,d-4$
without common factor.  Then the na\"{\i}ve and canonical heights of~$P$\/
both equal~$d$.
\end{lemma}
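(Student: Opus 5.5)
The plan is to apply Shioda's height formula for elliptic surfaces. For a section $P$ of an elliptic surface with Euler characteristic $\chi$ it gives
\[
h(P) = 2\chi + 2(P\cdot O) - \sum_v \mathrm{contr}_v(P),
\]
where $O$ is the zero section and $\mathrm{contr}_v$ is the local correction at a reducible fiber~$v$. For a K3 surface $\chi = 2$, so $h(P) = 4 + 2(P\cdot O) - \sum_v \mathrm{contr}_v(P)$. The proof then has two parts: show that no correction terms occur, and compute $(P\cdot O)$ in terms of~$d$.

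First I would show that (\ref{eq:X}) has no reducible fibers, so every $\mathrm{contr}_v$ vanishes. The cleanest route is the Shioda--Tate formula $\rho = 2 + r + \sum_v (m_v - 1)$, where $m_v$ is the number of components of the fiber at~$v$. With $\rho \le 20$, and $\rho(\XX)=19$ and $r=17$ as announced, this forces every $m_v = 1$. To avoid circularity with Theorem~\ref{thm:rk17}, which uses this lemma, I would instead check it directly from the Weierstrass data, which is a finite computation. The coefficients $S,T$ have degrees $8,12$ as binary forms of weights $8,12$, so the model is minimal at every place including $t=\infty$. A reducible fiber would require either a multiple root of the discriminant $4S^3 - T^2$ (up to the normalizing constant) at which $S$ does not vanish, giving $I_n$ with $n\ge2$, or a common root of $S$ and $T$ with orders $(\ge 2,\ge 3)$ or discriminant order $\ge 3$. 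So I would factor the discriminant and check $\gcd(S,T)$, including the behaviour at $\infty$ via the leading coefficients. I expect this to show that all singular fibers are of type $I_1$ or~$II$. This verification is the main obstacle, since it is the only genuinely computational step; everything else is formal.

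Next I would compute $(P\cdot O)$. Working homogeneously in $(t:1)$, $P$ meets $O$ exactly at the places where $x$ has a pole. At a place where $P$ meets $O$ with multiplicity~$k$, the local coordinate $x$ has a pole of order exactly $2k$ and $y$ a pole of order $3k$. From $y^2 = x^3 - 27Sx + \frac{27}{4}T$ with $\gcd(N,D)=1$, the denominator of $y^2$ is $D^3$ up to a constant, so $D$ must be a square up to a constant: $D = cE^2$ with $\deg E = (d-4)/2$. This is consistent because $d$ and $d-4$ have the same parity. Each root of $E$ of multiplicity $k$, including a possible root at $t=\infty$ when $\deg D < d-4$ in the inhomogeneous sense, contributes $k$ to $(P\cdot O)$. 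Hence $(P\cdot O) = \deg E = (d-4)/2$.

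Combining the two parts gives $h(P) = 4 + (d-4) = d$. The na\"{\i}ve height, taken here as the degree $d$ of the reduced homogeneous numerator of~$x$, is $d$ by definition. So the two heights coincide, as the lemma claims. As a sanity check, polynomial $x_i$ of degree~$4$ give $d=4$, matching the diagonal entries $4$ of~$G$.
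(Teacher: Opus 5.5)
Your argument is correct and is essentially the paper's: the paper computes $-(s_P-s_0)^2 = 4 + 2\,s_0\cdot s_P$ (Shioda's formula with $\chi=2$ and no correction terms, since there are no reducible fibers), then uses that $x$ has a double pole exactly along $s_0$ to get $2\,s_0\cdot s_P=\deg D=d-4$; you additionally justify the two facts the paper simply asserts, namely that $D$ is a square and that all fibers are irreducible. One small correction to that extra verification: with this normalization the discriminant is a constant multiple of $64S^3-T^2$, not $4S^3-T^2$, and the degrees of $S,T$ give minimality only at $t=\infty$ --- at finite places minimality comes from your $\gcd(S,T)$ check.
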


{\em Proof}\/: Since $\XX$ has no reducible fibers,
the na\"{\i}ve and canonical heights are equal.  We can compute
the na\"{\i}ve height as an intersection number $-(s_P-s_0)^2$,
which equals $4 + 2 s_0 \cdot s_P$ because $s_0 \cdot s_0 = s_P \cdot s_P = -2$.
Since the function $x$ on~$\XX$\/ has a double pole at~$s_0$
and no other poles, $2 s_0 \cdot s_P$ is the degree of the denominator~$D$,
which equals $d-4$.  Hence the height is $4 + (d-4) = d$, as claimed. \qed

In particular the height is at least~$4$, with equality if and only if
$x,y$ are homogeneous polynomials of degrees $4,6$
(so in terms of the affine coordinate~$t$ they are polynomials
of degrees $\leq 4$ and $\leq 6$).  We call such $(x,y)$
an {\em integral point}, and $s_P$ an {\em integral section}
of the fibration; geometrically, $s_P$ is integral if and only if
it is disjoint from~$s_0$.

\begin{lemma}
\label{lem:ip}
Let $P = (x,y)$ and $P' = (x',y')$ be integral points of the fibration
with $x \neq x'$.  Then
\be
\label{eq:ip}
(P, P') = 2 - \deg\bigl(\gcd(x-x',y-y')\bigr).
\ee
\end{lemma}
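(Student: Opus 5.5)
We will compute $(P,P')$ from the intersection-theoretic formula for the height pairing, in the same spirit as the proof of Lemma~\ref{lem:ht}. Since $\XX$ has no reducible fibers, the height pairing of two sections $P,P'$ is
\benn
(P,P') = 2 + s_0\cdot s_P + s_0\cdot s_{P'} - s_P\cdot s_{P'}
\eenn
(for a K3 surface, $\chi = 2$). For integral points both $s_P$ and $s_{P'}$ are disjoint from~$s_0$, as noted after Lemma~\ref{lem:ht}. The formula therefore reduces to $(P,P') = 2 - s_P\cdot s_{P'}$, and it remains to show that $s_P\cdot s_{P'} = \deg\gcd(x-x',y-y')$. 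As a sanity check, setting $P=P'$ in the formula gives $2 + 0 + 0 - (-2) = 4$, which agrees with Lemma~\ref{lem:ht}.

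Next we identify the intersection locally. The sections $s_P$ and $s_{P'}$ meet only over points $t_0 \in \PP^1$ where $P$ and $P'$ specialize to the same point of the fiber. Because both are integral, they never meet at infinity on the fiber, so the meeting occurs in the affine chart of the Weierstrass model. Here we use that $x,y$ are homogeneous forms of degrees $4,6$: the chart at $t=\infty$ is handled by the same argument in the other coordinate $(1:t)$, so the count is genuinely a count of zeros of binary forms on $\PP^1$.

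At such a point $t_0$, the surface is locally the hypersurface $y^2 = x^3 - 27Sx + \frac{27}{4}T$ in coordinates $(t,x,y)$. The two sections are the curves $t\mapsto(t,x(t),y(t))$ and $t\mapsto(t,x'(t),y'(t))$. Their local intersection multiplicity is computed by restricting the ideal of one curve, $(x - x'(t),\, y-y'(t))$, to the other curve. This gives $\min\bigl(\mathrm{ord}_{t_0}(x-x'),\,\mathrm{ord}_{t_0}(y-y')\bigr)$, which is exactly the multiplicity of $t_0$ as a root of $\gcd(x-x',y-y')$. Summing over $t_0$, including possibly $t=\infty$ via homogeneous forms, gives the degree of the gcd. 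The hypothesis $x\neq x'$ guarantees the gcd is a nonzero form, so the sections are distinct and the intersection is finite.

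The main obstacle is justifying this local computation at points where the Weierstrass model is singular, i.e.\ at singular fibers. At such points the intersection on the minimal smooth surface $\XX$ might differ from the naive count. Here the absence of reducible fibers helps: every singular fiber is of type $I_1$ or $II$, so the Weierstrass model is already smooth. A section, being transverse to the fiber, passes through a smooth point of the fiber and never through the node or cusp. So the local coordinates $(t,x,y)$ restricted to the surface are legitimate, and the ideal-restriction computation is valid. We also need to check that the multiplicity is the minimum of the orders rather than something smaller. This holds because the surface is smooth there with $t$ a local parameter along each section, so $s_P$ is cut out in a neighborhood by $x-x'(t)$ and $y-y'(t)$ together with the surface equation.
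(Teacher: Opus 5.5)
Your proposal is correct and is essentially the paper's second proof: you use $(P,P') = -(s_P-s_0)\cdot(s_{P'}-s_0)$ and the disjointness of integral sections from $s_0$ to reduce to $(P,P') = 2 - s_P\cdot s_{P'}$, and then identify $s_P\cdot s_{P'}$ with $\deg\gcd(x-x',y-y')$. You justify that last identification more carefully than the paper does, by noting that without reducible fibers the Weierstrass model is smooth, so the local multiplicity is $\min\bigl(\mathrm{ord}_{t_0}(x-x'),\mathrm{ord}_{t_0}(y-y')\bigr)$; note that in your final sentence it is $s_{P'}$, not $s_P$, that is cut out by $x-x'(t)$ and $y-y'(t)$, and that the paper also gives an alternative first proof via the addition formula for $x(P+P')$ and Lemma~\ref{lem:ht}.
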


Here $x$ and $x'$ (respectively $y$ and $y'$) are regarded as
sections of $\OO(4)$ (resp.~$\OO(6)$) on the projective \hbox{$t$-line};
when we represent them as polynomials in~$t$,
the degree of $\gcd(x-x',y-y')$ includes a term
$\min\bigl(4-\deg(x-x'), 6-\deg(y-y')\bigr)$.

{\em Proofs}: We give two proofs, one using the group law
and Lemma~\ref{lem:ht}, the other using the definition of $(P_i,P_j)$
as an intersection pairing.

{\em First proof.} Since
\be
\label{eq:ip_sum}
2 (P,P') = (P+P', P+P') - (P,P) - (P',P'),
\ee
and $(P,P) = (P',P') = 4$ because $P$ and $P'$ are assumed integral.
Thus it remains to find the canonical height of $P+P'$.  By the group law,
$P+P'$ has \hbox{$x$-coordinate}
\be
\label{eq:x(P+P')}
\left( \frac{y'-y}{x'-x} \right)^{\!2} - x - x'.
\ee
The denominator of (\ref{eq:x(P+P')}) is the square of the denominator of
$(y'-y) / (x'-x)$, and this denominator is $(x'-x) / \gcd(x'-x, y'-y)$.
Thus the degree of the denominator is
$2 \bigl[4 - \deg\bigl(\gcd(x'-x, y'-y)\bigr) \bigr]$.
Hence by Lemma~\ref{lem:ht} the height of $P+P'$ is
$12 - 2 \deg\bigl(\gcd(x'-x, y'-y)\bigr)$, so (\ref{eq:ip_sum}) gives
\be
\label{eq:ip_sum_ev}
2 (P,P') = 12 - 2 \deg\bigl(\gcd(x'-x, y'-y)\bigr) - 4 - 4
         = 4 - 2 \deg\bigl(\gcd(x'-x, y'-y)\bigr),
\ee
{}from which formula (\ref{eq:ip}) follows.

{\em Second proof.} Since our fibration has no irreducible fibers,
\be
\label{eq:ip_int}
(P,P') = - (s_P - s_0) \cdot (s_{P'} - s_0).
\ee
But $s_P \cdot s_0 = s_{P'} \cdot s_0 = 0$ because $P$ and $P'$ are integral,
and $s_0$ is a \hbox{$(-2)$-curve} so $-s_0 \cdot s_0 = 2$.  Hence
\be
\label{eq:ip_int1}
(P,P') = 2 - s_P \cdot s_{P'}.
\ee
But since $P$\/ and~$P'$ are integral, $s_P \cdot s_{P'}$ is just
the number of common roots of $x-x'$ and $y-y'$ with multiplicity
that is, the degree of $\gcd(x-x',y-y')$
(with $t=\infty$ accounted for as above). \qed

The \MoW\ lattice has $1311$ pairs of vectors of norm~$4$,
so we had numerous choices of basis.  We chose $P_1,\ldots,P_{17}$
so that each $x_i \in \Z[t]$ and $(P_1,P_i) = -2$ for $1 < i < 17$.
The latter condition allows for a shorter description of $P_i$
for $1 < i < 17$: instead of the \hbox{degree-$4$} polynomial $x_i$,
we need only list the ratio $m_i := (y_i-y_1)/(x_i-x_1)$,
which is a degree~$2$ polynomial by Lemma~\ref{lem:ip},
and turns out to have much smaller coefficients:
we compute that $m_2$ through $m_{16}$ are

{\tiny
\begin{verbatim}
{[3609606*t^2 + 17472654*t + 9306987, -13842426*t^2 - 47135036*t + 10995509, 5619654*t^2 + 34503044*t + 25228749,
   2903654*t^2 + 16309708*t + 25181877, -1674426*t^2 - 7417332*t + 1645077, -2879877/2*t^2 - 6644079/2*t + 35277,
  -1293435*t^2 - 74149065*t - 401684283, 598182*t^2 + 56959398*t - 102088725, 3845621*t^2 - 29297801*t - 58915563,
   3959553/2*t^2 + 21342168*t + 18889077, -2966262*t^2 + 2320578*t + 29039811, 2648379*t^2 + 907317*t - 59230479,
   1508406*t^2 + 20056077*t + 38513982, -1022424*t^2 + 32579784*t + 106182447, -4556214*t^2 - 12262536*t - 3300003]}
\end{verbatim}
}

For each $i = 2, \ldots, 16$ this determines $P_i$ up to the involution
$P \leftrightarrow -P_1 - P$, so $15$ further bits of information
determine these $15$ generators.
We could not make $(P_1,P_{17}) = -2$ as well,
because then $(P_1,P)$ would be $0 \bmod 2$ for all $P$ in the \MoW\ lattice,
and there is no such~$P_1$ of height only~$4$.  But we can use one of
the five $i$ such that $(P_i,P_{17}) = -2$; for example,
$(y_{17} - y_7) / (x_{17} - x_7)$ is
\verb:-183750*t^2 + 3253646*t - 11613683:\,.

A side effect of $(P_1,P_i) = -2$ for $1 < i < 17$ is that the Gram matrix
has no negative entries $P_i,P_j$ for $1 < i,j < 17$.  Indeed
$P_1 + P_i + P_j$ has height $4 + 2 (P_i, P_j)$, which implies
$(P_i, P_j) \geq 0$ because $P_1 + P_i + P_j \neq 0$
and every nonzero point has height at least~$4$.

\subsection{Fibers of rank at least $25$, $26$, $27$, and $28$.}
\label{ss:pair}

$\phantom\infty$

The fiber at $t = -9529 / 5471$ is the elliptic curve of rank at least~$28$
(and exactly $28$ under the Generalized Riemann Hypothesis (GRH)
for number fields \cite{KSW:GRH}) that we found on 1 May 2006
and announced two days later in \cite{NDE:NMBRTHRY28},
and which held the rank record until 2024.
This proves Theorem~\ref{thm:rk28}.

Our curves of ranks at least $25$, $26$, and $27$ \cite{Dujella} are
the fibers at $t = -2/377$, $t = -308/251$, and $t = 2456/135$ respectively.
The curve of rank $\geq 27$, also from 1 May 2006,
was first published in~\cite{KSW:GRH},
which proves that its rank is exactly~$27$ under GRH for number fields.
The curves of ranks at least~$25$ (found 21 April 2006)
and~$26$ (25 April 2006), were supplied on 7 January 2021
in reply to a query from Andrew Sutherland in order to fill
gaps in the then-known set of ranks of elliptic curves over~$\Q$
(the previous rank record was~$24$ \cite{rk24}).

\section{Quadratic base changes that increment the rank}
\label{s:18,19}

Let $f \in \NS (\XX)$ be the fiber class,
and suppose $C$ is a \hbox{$(-2)$-curve} on~$\XX$ such that $C \cdot f = 2$.
Then $t$ restricts to a \hbox{degree-$2$} function $C \to \PP^1_t$.
We call such~$C$\/ a ``rational quadratic section'' of the fibration.
Let $(\XX_C,C)$ be the base change of $(\XX,t)$ from the \hbox{$t$-line} to~$C$.
This elliptic fibration has the same \hbox{rank-$17$} group of sections
(composed with the quadratic cover $C \to \PP^1$,
and thus with all height pairings doubled),
and an extra section, call it $P_C$,
that takes every point $p \in C$ to $p$ itself.

\begin{lemma}
\label{em:rk18}
$P_C$ is independent of the pullback to $(\XX_C,C)$ of the
\MoW\ group of $(\XX,t)$.  Thus $(\XX_C,C)$ has \MoW\ rank at least~$18$.
\end{lemma}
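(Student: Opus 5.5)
We argue by contradiction, using the Galois action of the deck involution. Let $\iota$ be the involution of the double cover $C\to\PP^1_t$. It acts on the generic fiber of $\XX_C$, which is the generic fiber $E$ of $\XX$ base-changed to the quadratic extension $K=\Q(C)$ of $\Q(t)$. Thus $\iota$ acts on $E(K)$, and its fixed subgroup is $E(\Q(t))$, the pullback of the \MoW\ group of $(\XX,t)$.

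Suppose $P_C$ were dependent on that pullback: $nP_C = Q$ for some integer $n\neq 0$ and some $Q\in E(\Q(t))$. We will show that $P_C$ and $\iota(P_C)$ must then both be sums of sections defined over $\Q(t)$ and torsion, and derive a contradiction from the geometry of $C$.

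Write $\bar P_C=\iota(P_C)$. Applying $\iota$ to $nP_C=Q$ gives $n\bar P_C = Q$, so $n(P_C-\bar P_C)=0$. Hence $P_C-\bar P_C$ is torsion. The torsion of $E(K)$ is small; in fact $\XX$ has no reducible fibers, and torsion sections of an elliptic K3 force reducible fibers, so the torsion is trivial. Base change does not create reducible fibers except over branch points, and those are fibers of type $I_0$ or $I_1$ pulled back, which stay irreducible, so this is checkable. We conclude $P_C=\bar P_C$, so $P_C$ is defined over $\Q(t)$.

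That means $P_C$ is a section of the original fibration $\XX\to\PP^1$. Its image in $\XX$ is then an irreducible curve meeting each fiber once. But by construction the image of $P_C$ in $\XX$ is $C$ itself, which meets $f$ with $C\cdot f=2$, not $1$. This is the contradiction.

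An alternative, more lattice-theoretic route is as follows. Compute the height of $P_C$ by pushing forward to $\XX$; the pushforward of $P_C$ relative to the zero section is $C-s_0-(\ldots)f$. The component of $C$ orthogonal to the trivial lattice $\langle s_0,f\rangle$ and to the span of the $s_{P_i}-s_0$ is nonzero, because $C\cdot f=2$ is not matched by a combination of sections. I would keep this only as a cross-check.

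The step I expect to need the most care is the torsion-freeness of $E(K)$, together with the identification that an $\iota$-invariant point of $E(K)$ is a $\Q(t)$-section whose image is $C$. For the latter, I would note that a $\Q(t)$-rational point of $E(K)$ lying on the same curve $C\subset\XX$ would make $C\to\PP^1$ birational, contradicting degree~$2$.
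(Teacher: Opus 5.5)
Your strategy is the paper's. You use the deck involution $\iota$, observe that $nP_C=Q$ with $Q$ invariant forces $n(P_C-\bar P_C)=0$, and then show that $P_C-\bar P_C$ is nonzero and not torsion. Your argument that $P_C\neq\bar P_C$ is fine: the image of $P_C$ in $\XX$ is $C$, and $C\cdot f=2$. More directly, $P_C=\bar P_C$ would force $q=\iota(q)$ for every $q\in C$.

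The gap is the torsion step, which is the one you flagged. The reasons you give for $E(K)_{\mathrm{tors}}=0$ are wrong:
\begin{itemize}
\item $\XX_C$ is not a K3 surface. Pulling $S,T$ back along a degree-$2$ map gives Weierstrass coefficients of degrees $16$ and $24$, so $\chi(\XX_C)=4$, and the K3 heuristic does not apply as stated.
\item Fibers over the branch points need not stay irreducible. An $I_1$ fiber under a ramified double base change becomes $I_2$.
\item You also did not rule out type II fibers on $\XX$; the paper only says there are no reducible fibers. Such a fiber over a branch point becomes type IV.
\end{itemize}

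The conclusion is true and easy to repair. Only the at most two fibers over branch points can become reducible, each of type $I_2$ or IV. So for a torsion section $T$ of $\XX_C$ the height formula gives
$0=h(T)=2\chi+2\,T\cdot O-\sum_v \mathrm{contr}_v(T)\ge 8-2\cdot\frac23>0$,
a contradiction.

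The paper avoids torsion-freeness altogether. $P_C-\bar P_C$ meets $s_0$ over the fixed points of $\iota$, where $P_C$ and $\bar P_C$ coincide, but it is not identically $s_0$. In characteristic $0$ a nonzero torsion section is disjoint from the zero section. Hence $P_C-\bar P_C$ has infinite order, and since it is anti-invariant, no nonzero multiple of it is $\iota$-invariant. Either fix completes your argument. Your lattice-theoretic ``cross-check'' is too vague to count as a proof and is not needed.
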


{\em Proof}\/: The pullback of the \MoW\ group of $(\XX,t)$
is the subgroup of the \MoW\ group of $(\XX_C, C)$ fixed under
the Galois involution~$\iota$ of the quadratic cover~$C \to \PP^1_t$.
This involution takes $P_C$ to another section $P'_C$ with
$P_C\0 + P'_C$ Galois-invariant.  The difference $P_C\0 - P'_C$ is anti-invariant,
and intersects $s_0$ at the fixed points of~$\iota$ but not identically,
so no multiple can be in the invariant subgroup.\qed

The trace $\tau := P_C\0 + P'_C$ is some section of $(\XX,t)$.
Then the restriction of $s_\tau + s_0 - C$\/ to any fiber
is a principal divisor, and since $(\XX,t)$ has no reducible fibers
we must have $s_\tau + s_0 \sim C - nf$ for some~$n$.  Hence
\be
\label{eq:tau}
(s_\tau + s_0) \cdot (s_\tau + s_0) = (C - nf) \cdot (C - nf) = -2 - 4n.
\ee
The left-hand side is
\be
\label{eq:tau8}
s_\tau \cdot s_\tau + s_0 \cdot s_0 + 2 s_\tau \cdot s_0
= -2 - 2 + h(\tau) - 4 = h(\tau) - 8,
\ee
where $h(\tau)$ is the height of~$\tau$.
Thus $n = (6 - h(\tau)) / 4$, so $h(\tau) \equiv 2 \bmod 4$.
Conversely, suppose $\tau$ is in the \MoW\ group of $(\XX,t)$
with $h(\tau) \equiv 2 \bmod 4$, and let $n = (6 - h(\tau))/4 \in \Z$,
so $D_\tau := s_\tau + s_0 + nf$ is a divisor of self-intersection~$-2$.
Thus either the class $[D_\tau]$ or $[-D_\tau]$ contains an effective
divisor, and it must be $[D_\tau]$ because $D_\tau \cdot f = 2 > 0$.
Thus (again using the fact that there are no reducible fibers)
$D_\tau$ is linearly equivalent to either a rational quadratic section
or the sum of two sections.  The latter can certainly happen:
if $h(\tau) = 6$ then $n = 0$ and $D_\tau$ is already $s_0 + s_\tau$.
Translating $s_0 + s_\tau$ by any section~$P$, we find that more generally
$s_P + s_{P+\tau}$ is the effective divisor with trace $2P + \tau$.
In fact:

\begin{prop}
\label{prop:6}
The effective divisor in the class $[D_\tau]$ is a quadratic section
if and only there is no $P$ in the \MoW\ group such that $h(\tau - 2P) = 6$.
\end{prop}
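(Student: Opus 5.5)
The plan is to use the dichotomy established just before the statement. Since $D_\tau$ has self-intersection $-2$ and $D_\tau\cdot f = 2$, the class $[D_\tau]$ contains an effective divisor $D$. Because there are no reducible fibers, $D$ is either a rational quadratic section, or a sum $s_P + s_{P'}$ of two sections, possibly with some fibers added. I will show that the second case happens exactly when some $P$ has $h(\tau-2P)=6$. Then the ``if and only if'' is just the contrapositive of this, in both directions.

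First suppose $D \sim s_P + s_{P'} + mf$ with $m \ge 0$. Restricting to the generic fiber, as in the computation of the trace, gives $P+P' = \tau$, so $P' = \tau - P$. Translation by $-P$ is an automorphism of $\XX$ preserving $f$, so it preserves intersection numbers. Set $\sigma = \tau - 2P$; this translation carries $s_P + s_{P'}$ to $s_0 + s_{\sigma}$. The class of $D_\tau$ translates to the class $D_{\sigma}$ attached to $\sigma$. To see this, compare the $n$'s: $h$ is invariant under the lattice relation $h(\tau-2P)\equiv h(\tau) \bmod 4$, since $h(2P)=4h(P)$ and the cross term $4(\tau,P)$ is $\equiv 0 \bmod 4$ because all pairings are integral here. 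Using \eqref{eq:tau}--\eqref{eq:tau8} for $\sigma$, we get
\[(s_0+s_\sigma)\cdot(s_0+s_\sigma) = h(\sigma)-8.\]
Writing $D_\sigma = s_0+s_\sigma+n'f$ with $n'=(6-h(\sigma))/4$, effectivity of $s_0+s_\sigma+mf$ in the class $D_\sigma$ forces $m=n'\ge 0$, so $h(\sigma)\le 6$. Since $h(\sigma)\equiv 2 \bmod 4$, and $h(0)=0$ while every nonzero point has height at least $4$ (Lemma~\ref{lem:ht}), we conclude $h(\sigma)=6$.

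Conversely, if $h(\tau-2P)=6$, then as noted before the statement $D_{\tau-2P} = s_0+s_{\tau-2P}$ is effective. Translating by $P$ gives the effective divisor $s_P+s_{\tau-P}$ in $[D_\tau]$. This is a reducible member, so I still need to rule out a quadratic section in the same class. For that, note $C\cdot D = C^2 = -2 < 0$ for an irreducible $C\sim D$. But $C$ is irreducible and distinct from $s_P, s_{\tau-P}$, which would force $C\cdot(s_P+s_{\tau-P})\ge 0$, a contradiction. So the class contains no quadratic section.

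The main obstacle is the bookkeeping in the translation step. I must check that translation by $P$ carries $D_{\tau-2P}$ exactly to $D_\tau$, including the multiple of $f$. I would verify this by computing self-intersections on both sides, using that translations act as isometries fixing $f$. I would also confirm that $m$ is forced to equal $n'$: the equality of self-intersections gives $2m\cdot 2 = 4m$ matching $4n'$, so $m=n'$.
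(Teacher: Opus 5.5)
Your proof is correct and follows essentially the paper's argument. In both, a section translate reduces to $s_0+s_\sigma$, and $D_\tau^2=-2$ forces $h(\tau-2P)=6$: the paper gets this directly as $s_P\cdot s_{P'}=1$, hence $h(P'-P)=6$, and obtains the converse by translating $s_0+s_{\tau-2P}$. Your extra checks make explicit two points the paper leaves implicit: you allow fiber components in $D$, and you use $C\cdot D=C^2=-2<0$ to show that a class containing $s_P+s_{\tau-P}$ cannot also contain a quadratic section.
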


{\em Proof}\/: We already saw that if $h(\tau - 2P) = 6$ then
$D_\tau \sim s_P + s_{P+\tau}$.
Conversely, if $D_\tau \sim s_P + s_{P'}$ then $\tau = P + P'$
in the group law, and $s_P \cdot s_{P'} = 1$ because
$D_\tau \cdot D_\tau = -2$, so $h(P'-P) = 6$ and $\tau = 2P + (P'-P)$.\qed

{\bf Remark.} \cite{Jeechul} extends this to a characterization of
rational quadratic sections of an elliptic fibration that is allowed to have
reducible fibers as long as all have type $A_n$ for some~$n$
(i.e.\ multiplicative of type $I_{n+1}$ or additive of type III or IV).

If a rational quadratic section~$C$\/ has trace~$\tau$,
then the translate of~$C$\/ by~$P$\/ has trace $C+2P$
and gives rise to a quadratic base change isomorphic with~$\XX_C$.
Thus we get a finite set of distinct quadratic base changes,
corresponding to the cosets mod~$2$ of the \MoW\ group of~$(\XX,t)$
that have no representatives of height~$6$.  We find these cosets
by using the GP function {\tt qfminim} to find all vectors of norm~$6$
in the \MoW\ lattice, finding that there are $39120$ cosets
that contain no such vectors.

To finish the proof of part~(a) of Theorem~\ref{thm:rk18,19},
we need only find one $C$\/ that has rational points.  In fact all our~$C$\/
have rational points, because we can always find a section $s_P$
such that $s_P \cdot C = 1$ (indeed any odd $s_P \cdot C$\/ would do).
Alternatively, it suffices to exhibit a rational point:
one of the simplest quadratic base changes is
\be
\label{eq:tau1}
\verb: u^2 = 4225*t^2 + 38636*t + 289444 :
\ee
which has a pair of rational points at infinity because
the leading coefficient $4225$ is $65^2$.

For part~(b), we choose $\tau,\tau'$
satisfying the condition of Proposition~\ref{prop:6},
and let $C,C'$ be the rational quadratic sections associated to $\tau,\tau'$.
Forming their compositum
yields a biquadratic base change to an elliptic fibration over
some \hbox{genus-$1$} curve $E_0$, with two new sections.
Again we use a Galois action, this time of $\{\pm1\}^2$,
to prove that we get a \MoW\ group of rank at least~$19$.
We then further require that $D_\tau \cdot D_{\tau'}$ be odd,
which happens for about half of the pairs $\tau,\tau'$.
Then $C \cap C'$ gives a divisor of odd degree on~$E_0$;
since $E_0$ already has divisors of degree~$4$ such as the preimage of $t=0$,
it follows that $E_0$ has a divisor of degree~$1$,
which is then effective by Riemann-Roch.
Hence $E_0$ has a rational point, which we use to make $E_0$ an elliptic curve.
We expect that this point together with its images under $\Gal(E_0/\PP^1_t)$
will generate a group of positive rank.  Again it is enough to check this
in a single example.  We combine (\ref{eq:tau1}) with another
quadratic base change
\be
\label{eq:tau2}
\verb: u^2 = 54756*t^2 - 3269604*t + 22473889$ :
\ee
which also has rational points at $t = \infty$
(leading coefficient $54756 = 234^2$).
We find that not only does the resulting point on the elliptic curve
\be
\label{eq:E0}
E_0: \verb: y^2 = x^3 + 1029367969*x^2 - 42900734074705920*x :
\ee
have infinite order, but $E_0$ has rank~$4$.
This is more than enough to prove~(b).
Part~(c) then follows by Silverman's specialization theorem.\qed

\section*{Acknowledgements}
Most of this work was done in 2006, and supported in part by
NSF grants DMS-0200687 and DMS-0501029 (the latter number is auspiciously
the smallest prime conductor of an elliptic curve of rank~$4$\ldots);
the only exception was the new choice of generators that lets us encode them
more efficiently.  Even that part, and the text, was done with no AI assistance
except that its writing was catalyzed by news of the AI-assisted work of
\cite{Rubinstein} and \cite[Curve~273]{ICARM}.
The computations were done in GP/PARI,
supplemented in a few spots with Magma function calls.

\end{document}